\newtheorem{thm}{Theorem}[section]
\newtheorem{lemma}[thm]{Lemma}
\newtheorem{prop}[thm]{Proposition}
\theoremstyle{definition}
\newtheorem{defn}[thm]{Definition}
\theoremstyle{remark}
\newtheorem{ex}{Example}
\numberwithin{equation}{section}
\newcommand{\eps}{\varepsilon}
\newcommand{\Real}{\mathbb R}
\newcommand{\Complex}{\mathbb C}
\newcommand{\disk}{\mathbb D}
\newcommand{\B}{\mathcal{B}}
\newcommand{\h}{\mathcal{H}}
\newcommand{\M}{\mathcal{M}}
\newcommand{\p}{\mathcal{P}}
\newcommand{\V}{\mathcal{V}}  
\newcommand{\ol}{\overline}
\DeclareMathOperator*{\spn}{span}
\DeclareMathOperator*{\cspn}{\overline{span}}
\begin{document}
\title
{Compact real linear operators}
\author[S. Ruotsalainen]{Santtu Ruotsalainen}
\address{Aalto University \\ Institute of Mathematics \\ P.O. Box 11100 \\ FI-00076 Aalto \\ Finland}
\email{Santtu.Ruotsalainen [at] aalto.fi}
\subjclass[2010]{Primary 47J10; Secondary 47A74, 15A04}
\keywords{real linear operator, antilinear operator, compact operator, characteristic polynomial, real analytic polynomial, spectrum}
\date{\today}
\begin{abstract}
Real linear operators emerge in a range of mathematical physics applications. In this paper spectral questions of compact real linear operators are addressed. A Lomonosov-type invariant subspace theorem for antilinear compact operators is proved. Properties of the characteristic polynomial of a finite rank real linear operator are investigated. A related numerical function, defined as a normalization of the characteristic polynomial, is studied. An extension to trace-class operators is discussed. 
\end{abstract}
\maketitle
\section{Introduction}

Real linear operators arise in various applications in mathematical physics, such as in Cald\'eron's inverse problem in the plane \cite{astalapaivarinta} and in studies related with planar elasticity \cite{putinarshapiro}. 
An operator $R$ on a separable complex Hilbert space $\h$ is said to be real linear if $R(x+y)=Rx+Ry$ and $R(rx)=rRx$ for all $x,y\in\h$ and $r\in\Real$. 
This paper is concerned with compact real linear operators  and their spectral properties. 
Compactness means that images of bounded sets have compact closures. 
An invariant subspace result for compact antilinear operators is proved. 
To study the spectrum of finite rank real linear operators, properties of the characteristic polynomial are investigated 
and a related numerical function is introduced. 

Spectral studies of operators are naturally linked with invariant subspaces. 
For compact complex linear operators, the existence of a nontrivial invariant subspace was established by von Neumann in an unpublished form. 
The subsequent  generalizations in \cite{as:invariant, br:invariant, halmos:invariant, lomonosov:invariant, michaels:hildensprf} lead to Lomonosov's theorem. 
We prove an antilinear analogue stating that given a nonzero compact antilinear operator $R$ there exists a nontrivial complex linear subspace invariant under every real linear operator in the norm closure of complex polynomials in $R$. 
Taking into account that a compact real linear operator does not necessarily have a nontrivial invariant subspace, it is not clear for which class of operators the result could be extended further.  

The spectrum of a finite rank real linear operator is given by the zero set of a characteristic polynomial $p(\lambda, \ol\lambda)$ with the inclusion of the origin when $\h$ is infinite dimensional. 
The spectral problem is challenging already in the finite dimensional case studied in \cite{eirolaetal:solmeth, rlinmatan, huhtanen:rlineigprob}. 
The characteristic polynomial is a so-called real analytic polynomial so that its zero set, if nonempty, is a plane algebraic curve. 
However, it is nontrivial to show under what assumptions the spectrum is nonempty. 
For compact real linear operators, so far, it is known that the intersection of the spectrum and a straight line through the origin consists of isolated points that accumulate at most to the origin; see \cite{reallinearoperator}. 

In this paper, it is shown that the characteristic polynomial $p$ can be cast in the form 
$$
p ( \lambda , \overline\lambda ) = v^* H v
$$
where $H=[h_{ij}]_{i,j=0}^n \in \Complex^{(n+1)\times (n+1)}$ is a Hermitian matrix and $v=[\lambda^j]_{j=0}^n \in\Complex^{n+1}$. 
This factorization allows proving a number of properties of $p$. 
First, $p$ can be represented as a weighted sum of squares $\sum d_i |p_i|^2$, where the $d_i$ are real and $p_i$ complex analytic polynomials. 
Moreover, the normalization 
$$
F (\lambda) = \frac{p(\lambda,\overline\lambda)}{ \sum_{i=0}^n |\lambda|^{2i}  }
$$
is shown to be connected to the field of values of the coefficient matrix $H$. 
More precisely, its values are convex combinations of the eigenvalues of $H$ and hence a subset of the field of values of $H$. 
When transformed to infinite dimensions, a similar numerical function for  trace-class real linear  operators is studied by utilizing a generalization of the determinant.

The contents are as follows. In Section 2 some preliminary definitions are given and Shatten classes are discussed. In Section 3 an antilinear version of Lomonosov's theorem is proved. In Section 4 the characteristic polynomial of finite rank real linear operators is studied. The so-called numerical function is introduced and its properties are investigated. A related numerical function for trace-class operators is considered. 

\section{Real linear compactness}

\subsection{Preliminaries}

Let $\h$ be a complex separable Hilbert space. 
A real linear operator $R$ on $\h$ is complex linear if $Ri=iR$, or antilinear if $Ri=-iR$. 
With respect to these extremes, a real linear operator $R$ on $\h$ can be represented uniquely as 
\begin{equation}\label{CArep}
R = C + A
\end{equation}
where $C=\frac 12 (R-iRi)$ is the complex linear and $A=\frac 12 (R+iRi)$ is the antilinear part of $R$. Further, an antilinear operator $A$ can be factored as $A=B\kappa$, where $B=A\kappa$ is complex linear and $\kappa$ is a unitary conjugation, i.e., an antilinear operator $\kappa$ that is involutory, $\kappa^2=I$, and unitary (bijective isometry). The complexification of $R$ with respect to $\kappa$ is then a complex linear operator on $\h\oplus\h$ defined by
\begin{equation}\label{complexification}
R^\Complex = 
\begin{bmatrix}
C  & A\kappa \\
\kappa A & \kappa C \kappa
\end{bmatrix} . 
\end{equation}

A real linear operator $R$ is bounded if its operator norm 
$\|R\| = \sup \{ \|Rx\| : \|x\|=1\} $ 
is finite. 
\begin{defn}
The spectrum $\sigma(R)$ consists of those $\lambda\in\Complex$ for which $R-\lambda$ does not have a bounded inverse. 
\end{defn}
As usual, $\lambda$ is an eigenvalue of $R$ if there exists $x\neq 0$ such that $Rx=\lambda x$. 


Compactness is defined using  \eqref{CArep}.
\begin{defn}
A real linear operator $R=C+A$ on $\h$ is compact if its complex linear part $C$ and antilinear part $A$ are compact. 
\end{defn}
A real linear operator $R$ is of finite rank $n$ if the so-called left-rank $n=\dim (R\h^\perp)^\perp$ is finite. Or equivalently, the dimension $n$ of the smallest complex linear subspace of $\h$ containing the range of $R$ is finite. 
Obviously, a real linear operator is compact if and only if it can be approximated in operator norm with finite rank operators, or yet equivalently, if its complex linear and antilinear parts can be approximated by finite rank complex linear and antilinear operators, respectively. 

Fairly recent examples of compact real linear operators are provided by the so-called boundary Friedrichs operator  and the Friedrichs operator of a planar domain. More elaborate applications to inverse problems can be found in \cite{astalapaivarinta}. 

\begin{ex}
Consider the Hardy space on the unit circle $\mathbb T$
$$H^2=\{ f(e^{i\theta}) = \sum_{k\in\mathbb Z} \hat f(k) e^{ik\theta} \in L^2 : \hat f(k) =0 \text{ for all } k < 0 \}  
$$
with the inner product $(f,g) = \frac{1}{2\pi}\int_0^{2\pi} f(e^{i\theta}) \overline{g(e^{i\theta})} \,d\theta$. The so-called boundary Friedrichs operator $B_a$ with symbol $a = \sum_{k\in\mathbb Z} a_k e^{ik\theta} \in L^\infty (\mathbb T)$ is defined on $H^2$ as
$$ B_a f = P a \overline f ,
$$
where $P$ is the orthogonal projection from $L^2$ onto $H^2$. For the orthonormal basis $\{ e^{ik\theta} \}_{k=0}^\infty $ we have
$$ (B_a e^{ik\theta} , e^{il\theta} ) = a_{k+l} .
$$
Thus $B_a$ is an antilinear self-adjoint operator on $H^2$ and the infinite matrix with respect to $\{ e^{ik\theta} \}_{k=0}^\infty $ is a  Hankel matrix. Furthermore, it is known that $B_a$ is compact when $a\in C(\mathbb T)$. For further reference on more general Friedrichs operators, see \cite{putinarshapiro, putinarshapiro2}. 
\end{ex}

\begin{ex}
Let $\Omega$ be a planar domain. The Friedrichs operator with symbol $a\in L^{\infty}(\Omega)$ is defined on the space of analytic square-intergable functions $AL^2(\Omega)$ as
$$ F_a : AL^2(\Omega) \to AL^2(\Omega), \quad F_a f = P a \overline f ,
$$
where $P$ is the orthogonal projection from $L^2(\Omega)$ onto $AL^2(\Omega)$, called the Bergman projection. It is known that if boundary of $\Omega$ is smooth enough and if $a\in C(\overline \Omega)$, then $F_a$ is compact. 
Here $\overline \Omega$ denotes the closure of $\Omega$. 
Again, for a more accurate description see \cite{putinarshapiro, putinarshapiro2}.  
\end{ex}

Recall that a compact complex linear operator is in the Shatten $p$-class if its singular numbers belong to  $l^p$. The singular numbers of a compact antilinear  operator $A$ are defined identically as $\sqrt{s_j(A^*A)}$, and if they belong to $l^p$, $A$ is in the Schatten $p$-class. Then we set the following for compact real linear operators.  

\begin{defn} 
A compact real linear operator $R=C+A$ on $\h$ is in the Schatten $p$-class, denoted by $\B_p(\h)$, for $1\leq p <\infty$, if both its complex linear part $C$ and its antilinear part $A$ are in Schatten $p$-class. The Schatten $p$-norm of $R$ is defined as the sum
\begin{equation}
 \|R\|_p = \| C \|_p + \|A\|_p
\end{equation}
with 
$ \|C\|_p^p=\sum_{j=1}^{\infty} s_j(C)^p$  and $\|A\|_p^p=\sum_{j=1}^{\infty} s_j(A)^p$. 
Following standard nomenclature, operators in $\B_1(\h)$ are called trace-class (or nuclear), and those in $\B_2(\h)$ Hilbert-Schmidt. 
\end{defn}

This is just one way of defining the Schatten norm of a real linear operator. 
It is not clear how different approximations numbers in general are related to this definition of singular numbers for real linear operators; see \cite{rlinapprox} for real linear operators of finite rank. This choice of Schatten norm for a real linear operator is equivalent to its complexification in the following sense. 

\begin{prop}\label{oplusequiv}
Let $R=C+A$ be a compact real linear operator on $\h$ with complex linear part $C$ and antilinear part $A$, and let $R^\Complex$ be the complexification with respect to some unitary conjugation $\kappa$. Then $R^\Complex\in\B_p(\h\oplus\h)$ if and only if $R\in\B_p(\h)$. In addition, it holds  
$$
\|(C+A)^\Complex\|_p^p \geq \|C\|_p^p + \|A\|_p^p
$$
and 
$$
\| C^\Complex\|_p^p = 2 \|C\|_p^p, \quad \|A^\Complex\|_p^p = 2 \|A\|_p^p.
$$ 
\end{prop}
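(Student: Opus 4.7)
The plan is to express the complexification as a sum $R^{\Complex} = C^{\Complex} + A^{\Complex}$ of its block-diagonal and block-antidiagonal parts (obtained by plugging $A = 0$ or $C = 0$ into \eqref{complexification}), then to extract each piece from $R^{\Complex}$ via a unitary trick, and finally to compute their Schatten norms separately from their block structure.

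For the first step, I would introduce the self-adjoint unitary $U = \begin{bmatrix} I & 0 \\ 0 & -I \end{bmatrix}$ on $\h \oplus \h$. A direct computation from \eqref{complexification} gives
$$
C^{\Complex} = \tfrac12\bigl(R^{\Complex} + U R^{\Complex} U\bigr), \qquad A^{\Complex} = \tfrac12\bigl(R^{\Complex} - U R^{\Complex} U\bigr).
$$
Unitary invariance of the Schatten $p$-norm together with the triangle inequality then at once yields $\|C^{\Complex}\|_p \leq \|R^{\Complex}\|_p$ and $\|A^{\Complex}\|_p \leq \|R^{\Complex}\|_p$.

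For the second step, I would compute the singular values of $C^{\Complex}$ and $A^{\Complex}$. Using $\kappa^{*} = \kappa$ (which follows from $\kappa$ being an involutory isometry), one gets the block-diagonal positive operators $(C^{\Complex})^*C^{\Complex} = C^*C \oplus \kappa C^*C \kappa$ and $(A^{\Complex})^*A^{\Complex} = A^*A \oplus \kappa A^*A \kappa$. The key observation is that conjugation of a positive complex linear operator $T$ by the antiunitary $\kappa$ preserves its eigenvalues with multiplicity: if $T\phi = \mu\phi$ with $\mu \in \Real$, then $(\kappa T \kappa)(\kappa\phi) = \mu \kappa\phi$. Hence each singular value of $C$ appears twice among those of $C^{\Complex}$, giving $\|C^{\Complex}\|_p^p = 2\|C\|_p^p$, and the same argument with $A^{*}A$ gives $\|A^{\Complex}\|_p^p = 2\|A\|_p^p$.

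Combining the two steps, $\|R^{\Complex}\|_p^p \geq \max(2\|C\|_p^p, 2\|A\|_p^p) \geq \|C\|_p^p + \|A\|_p^p$, which is the stated inequality and also yields the ``only if'' direction of the equivalence. The ``if'' direction is then immediate: if $C, A \in \B_p(\h)$ then the two equalities place $C^{\Complex}, A^{\Complex}$ in $\B_p(\h\oplus\h)$, and $R^{\Complex}$ lies in $\B_p$ as their sum. I expect the only subtle point to be careful handling of the antilinear adjoint conventions; once $\kappa^{*} = \kappa$ and spectrum-preservation under antiunitary conjugation are in hand, the remainder is routine block algebra.
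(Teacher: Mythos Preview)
Your argument is correct. The route differs from the paper's in two places worth noting. For the ``only if'' direction and the inequality, the paper invokes the variational description of the Schatten norm,
\[
\|T\|_p^p = \sup\Bigl\{\ \sum_j |(Th_j,h_j)|^p : \{h_j\}\text{ orthonormal}\Bigr\},
\]
and evaluates the sum on orthonormal sets of the form $(e_j,0)$, $(0,f_j)$ to separate $C$ from $A\kappa$. Your unitary-averaging trick with $U=\mathrm{diag}(I,-I)$ is cleaner: it avoids appealing to that variational formula (whose precise statement for general $p$ requires some care) and directly yields $\|C^{\Complex}\|_p,\|A^{\Complex}\|_p \le \|R^{\Complex}\|_p$, from which $\|R^{\Complex}\|_p^p \ge 2\max(\|C\|_p^p,\|A\|_p^p) \ge \|C\|_p^p+\|A\|_p^p$ follows. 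For the two equalities, the paper writes out Schmidt representations of $C$ and $A\kappa$ and builds explicit orthonormal bases of $\h\oplus\h$ exhibiting the doubled singular values; your computation of the block-diagonal forms $(C^{\Complex})^*C^{\Complex}=C^*C\oplus\kappa C^*C\kappa$ and $(A^{\Complex})^*A^{\Complex}=A^*A\oplus\kappa A^*A\kappa$, together with the observation that antiunitary conjugation preserves the spectrum of a positive operator, reaches the same conclusion more structurally. Both approaches are equivalent in content; yours packages the bookkeeping more efficiently and makes the role of $\kappa^*=\kappa$ explicit.
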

\begin{proof}
Assume $R^\Complex\in\B_p(\h\oplus\h)$. We have 
\[ 
\|R^\Complex\|^p_p= \sup \left\{ \sum_j | (R^\Complex h_j, h_j)_{\h\oplus\h} | \ :\  \{h_j\} \text{ orthonormal} \right\}.
\]
Choosing $\{h_j\}_{j=1}^\infty$ to be such that $h_{2j-1}= (e_j,0)$ and $ h_{2j} =(0,f_j)$, where $\{e_j\}$ and $\{f_j\}$ are orthonormal sets of $\h$, we get 
\[ 
 \sum_j |(R^\Complex h_j, h_j)_{\h\oplus\h}|^p = \sum_j |(Ce_j,e_j)|^p + \sum_j |(A\kappa f_j, f_j)|^p .
\]
Since the orthonormal sets $\{e_j\}$ and $\{f_j\}$ are arbitrary, we see that $C$ and $A\kappa$, hence and also $A$, are in $\B_p(\h)$. 

Assume $R=C+A \in\B_p(\h)$ so that by definition $C,A \in\B_p(\h)$. It follows for the complexifications $C^\Complex, A^\Complex\in\B_p(\h\oplus\h)$. Namely, as we have the representation $C=\sum_j s_j(C) (\cdot, e_j) f_j$ for some orthonormal bases $\{e_j\}$ and $\{f_j\}$, then $\kappa C \kappa = \sum_j s_j(C) (\cdot, \kappa e_j) \kappa f_j$. Then for all $(x,y)\in\h\oplus\h$ it holds 
\[
 \begin{bmatrix}
  C & \\ & \kappa C \kappa
 \end{bmatrix}
{\begin{bmatrix} x \\ y \end{bmatrix}}
= \sum_j s_j(C) \left( {\begin{bmatrix} x \\ y \end{bmatrix}} , {\begin{bmatrix} e_j \\ 0 \end{bmatrix}} \right) {\begin{bmatrix} f_j \\ 0 \end{bmatrix}} 
+ s_j(C) \left( {\begin{bmatrix} x \\ y \end{bmatrix}} , {\begin{bmatrix} 0 \\ \kappa e_j \end{bmatrix}} \right) {\begin{bmatrix} 0 \\ \kappa f_j \end{bmatrix}}
\]
where the sets $\{ h_j \}_{j=1}^{\infty}$ and $\{g_j\}_{j=1}^\infty$, with $h_{2j-1}=(e_j,0)$, $h_{2j} =(0,\kappa e_j)$ and $g_{2j-1}=(f_j,0)$, $g_{2j}=(0,\kappa f_j)$, form orthonormal bases of $\h\oplus\h$. This means that the singular values of $C^\Complex$ are those of $C$ but each repeated twice and therefore $\|C^\Complex\|_p^p = 2\|C\|_p^p$. 

In a similar fashion, the representations $A\kappa = \sum_j s_j(A) (\cdot, \kappa e_j) f_j$ and $\kappa A = \sum_j s_j(A) (\cdot, e_j) \kappa f_j$ lead to 
\[
 \begin{bmatrix}
   & A\kappa \\ \kappa A&  
 \end{bmatrix}
{\begin{bmatrix} x \\ y \end{bmatrix}}
= \sum_j s_j(A) \left( {\begin{bmatrix} x \\ y \end{bmatrix}} , {\begin{bmatrix} e_j \\ 0 \end{bmatrix}} \right) {\begin{bmatrix} 0 \\ \kappa f_j \end{bmatrix}} 
+ s_j(A) \left( {\begin{bmatrix} x \\ y \end{bmatrix}} , {\begin{bmatrix} 0 \\ \kappa e_j \end{bmatrix}} \right) {\begin{bmatrix} f_j \\ 0 \end{bmatrix}} .
\]
Again, it is seen that $\|A^\Complex\|_p^p = 2 \| A \|_p^p$. It follows $R^\Complex\in\B_p(\h\oplus\h)$. 
\end{proof}

Although the trace-class of real linear operators has been defined above, defining an actual trace for a real linear operator is seemingly more problematic. 
Namely, for an antilinear operator $A$, the sum $\sum_j (Ae_j, e_j)$ depends on the orthonormal basis $\{e_j\}$. 
Fixing an orthonormal basis of $\h$ allows  defining a basis-dependent trace of a real linear operator as the trace of its complexification with respect to this basis.

\section{Invariant subspaces}


As is well known, eigenspaces corresponding to specific eigenvalues determine invariant subspaces for a compact complex linear operator. Recall that a set $\M \subset\h$ is invariant under an operator $L$ when $L\M \subset \M$. Granted, a specific eigenvalue determines a real linear invariant subspace for a real linear operator. However, as there are real linear operators defined more naturally on complex linear Hilbert spaces, studying complex linear invariant subspaces is, for the same reason, more natural. This is also done to preserve the structure of the underlying complex Hilbert space. 

In general, for a real linear operator, having a complex linear invariant subspace is quite a restricting property. 

\begin{prop}
Let $\M$ be a complex linear invariant subspace under a real linear operator $R=C+A$ on $\h$, where $C$ and $A$ are the complex linear and antilinear parts of $R$. Then $\M$ is invariant under both $C$ and $A$. 
\end{prop}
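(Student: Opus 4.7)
The plan is to exploit the defining property of a complex linear subspace, namely closure under multiplication by $i$, together with the commutation relations $Ci=iC$ and $Ai=-iA$ that distinguish the complex linear part from the antilinear part. The point is that if $x\in\M$ then both $Rx$ and $R(ix)$ must lie in $\M$, and a suitable complex linear combination of these two vectors will isolate either $Cx$ or $Ax$.

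Concretely, I would fix an arbitrary $x\in\M$. Since $\M$ is a complex linear subspace, $ix\in\M$, and invariance under $R$ gives both
\[
Rx = Cx+Ax\in\M \qquad\text{and}\qquad R(ix)=C(ix)+A(ix)=iCx-iAx\in\M.
\]
Multiplying the first equation by $i$ (which keeps the vector in $\M$) gives $iCx+iAx\in\M$. Adding and subtracting the resulting element with $iCx-iAx$ produces $2iCx\in\M$ and $2iAx\in\M$. Dividing by $2i$, which is again a complex scalar operation preserving $\M$, yields $Cx\in\M$ and $Ax\in\M$. Since $x\in\M$ was arbitrary, this shows $C\M\subset\M$ and $A\M\subset\M$.

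There is essentially no obstacle here: the entire content of the statement is the observation that the unique decomposition $R=C+A$ into complex linear and antilinear parts can be recovered by evaluating $R$ on $x$ and on $ix$ and then solving a $2\times 2$ linear system, and this recovery stays inside any subspace that is simultaneously closed under $R$ and under multiplication by $i$. The one subtlety worth flagging explicitly in the write-up is that complex linearity of $\M$ is used twice, once to guarantee $ix\in\M$ and once to guarantee that the scalar manipulations needed to extract $Cx$ and $Ax$ remain inside $\M$; without complex linearity of $\M$, the argument breaks down and the conclusion indeed can fail.
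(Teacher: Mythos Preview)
Your proof is correct and is essentially identical to the paper's own argument, which simply invokes the formulas $C=\tfrac12(R-iRi)$ and $A=\tfrac12(R+iRi)$ from the preliminaries to conclude $Cx,Ax\in\M$ for any $x\in\M$. You have merely unpacked those formulas step by step rather than quoting them.
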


\begin{proof}
Assume $x\in\M$. Then also $Cx =\frac 12(R-iRi)x \in \M$ and $Ax=\frac 12 (R+iRi) x \in\M$. 
\end{proof}

For this reason, we are concerned with compact antilinear operators and show that then there exists, much like in the complex linear case, a nontrivial closed complex linear invariant subspace. The investigation of the invariant subspace problem is an longstanding project. In 1930, von Neumann proved that every compact operator has a nontrivial closed invariant subspace. Aronszajn and Smith extended this to compact operators on Banach spaces \cite{as:invariant}. Using non-standard analysis, Bernstein and Robinson proved the version for polynomially compact operators on a Hilbert space \cite{br:invariant}, and Halmos proved the same using classical analysis \cite{halmos:invariant}. Lomonosov proved that there exists a nontrivial closed subspace of a Banach space that is invariant under every operator commuting with a given nonzero compact operator \cite{lomonosov:invariant}. Hilden simplified the proof of this \cite{michaels:hildensprf}. For recent advances and a modern approach on the invariant subspace problem in general, see \cite{cp:modappinvariant}. 

Let $p(z) = \sum_{j=0}^n a_j z^j$ be a polynomial with $a_j\in\Complex$. 
A complex polynomial in a real linear operator $R$ is defined to be the real linear operator $\sum_{j=0}^n a_jR^j$. 

\begin{defn}
For a compact antilinear operator $A$ on $\h$, define $\langle A \rangle$ to be the closure in the operator norm of polynomials in $A$ with complex coefficients.  
\end{defn}

Note that $\langle A \rangle$ is almost a subalgebra in the sense that if $\lambda\in\Complex$ and $T,S \in\langle A \rangle$, then $\lambda T$, $T+S$, $TS \in\langle A \rangle$. 
Obviously the main obstruction is $T(\lambda S) \neq \lambda TS$. In general, the elements of $\langle A \rangle$ are genuinely real linear operators since even powers of $A$ are complex linear while odd powers are antilinear. 
Also, $\langle A \rangle$ contains noncompact elements, e.g. all polynomials $p(A)=\sum_j a_j A^j \neq 0$ for which $a_0\neq 0$. 
Clearly $\M$ is a closed complex linear invariant subspace under $A$ if and only if it is such under every $T\in\langle A \rangle$.

Like in the complex linear case, invariant complex linear subspaces for an antilinear operator $A$ may be generated by forming the closure of the complex linear span of $\{A^jy\}_{j=0}^\infty$ for any nonzero $y\in\h$. Indeed, denoting by $\cspn_\Complex$ the closed complex linear span, 
\begin{equation}\label{cspn}
\cspn_\Complex \{A^jy\}_{j=0}^\infty = \langle A \rangle y = \{ Ty : T\in\langle A \rangle \},
\end{equation}
is a closed complex linear subspace of $\h$. Either $\langle A \rangle y$ is a closed nontrivial invariant subspace under $A$, or $\langle A \rangle y=\h$ whence $y$ is called a cyclic vector of $A$.

The arguments of Lomonosov and Hilden \cite{michaels:hildensprf} largely apply in proving the following. Of course, antilinearity requires some changes. 
Indeed, the fact that \eqref{cspn} is a complex linear subspace of $\h$ is crucial. 

\begin{thm}\label{thmlomo}
Let $A$ be a nonzero compact antilinear operator on $\h$. Then there exists a nontrivial closed complex linear subspace of $\h$ that is invariant under every operator $T\in\langle A \rangle$.
\end{thm}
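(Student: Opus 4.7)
My plan is to adapt Hilden's streamlined proof of Lomonosov's invariant subspace theorem, modifying it to accommodate the antilinearity of $A$. First I would assume, toward a contradiction, that no nontrivial closed complex linear $\langle A \rangle$-invariant subspace of $\h$ exists, so by \eqref{cspn} that $\overline{\langle A \rangle y} = \h$ for every $y \neq 0$; the complex linearity of $\langle A \rangle y$ is exactly what makes density of this set the right working hypothesis. Normalize $\|A\| = 1$ and pick $x_0$ with $\|Ax_0\| > 1$. Then $0 \notin B := \{x : \|x - x_0\| \leq 1\}$, and $K := \overline{A(B)}$ is compact (since $A$ is compact), convex (as the closure of the image of the convex set $B$ under the real linear map $A$), and disjoint from $0$.

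Density gives, for each $y \in K$, some $T_y \in \langle A \rangle$ with $T_y y \in B^\circ$; compactness of $K$ yields a finite subcover with operators $T_1, \dots, T_n \in \langle A \rangle$. Taking a continuous partition of unity $\{\beta_i\}$ on $K$ subordinate to this cover, I would define
$$
g(z) = A\!\left(\sum_{i=1}^n \beta_i(z)\, T_i z\right),
$$
which sends $K$ continuously into itself because $\sum_i \beta_i(z) T_i z$ is a convex combination of points of $B$ (hence lies in $B$) and $A(B) \subset K$. Schauder's fixed point theorem then yields $z^* \in K \setminus \{0\}$ with $z^* = APz^*$ for $P := \sum_i \beta_i(z^*) T_i \in \langle A \rangle$.

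The hard part will be converting this fixed-point equation into a proper closed complex linear $\langle A \rangle$-invariant subspace, which is where antilinearity genuinely interferes. In Hilden's original argument, commutation of $Q := AP$ with $A$ forces the $1$-eigenspace of the compact operator $Q$ to be both finite-dimensional and $A$-invariant, yielding an eigenvector of $A$ and hence an invariant subspace for the entire commutant. Here $A$ and $P$ do not commute; antilinearity replaces commutation with the twisted identity $A\, p(A) = \overline{p}(A)\, A$ for polynomials $p$. My plan is to exploit this together with the compact complex linear operator $A^2 \in \langle A \rangle$: since $A^2 x = \lambda x$ implies $A^2(Ax) = \overline{\lambda}(Ax)$, every nonzero eigenvalue $\lambda$ of $A^2$ furnishes a finite-dimensional closed complex linear subspace $E_\lambda + E_{\overline{\lambda}}$ that is $\langle A \rangle$-invariant; the degenerate case $A^2 = 0$ is easy, since then $\ker A$ is already a proper closed $\langle A \rangle$-invariant subspace. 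Extracting such an eigenvalue of $A^2$ (or a suitable complex linear substitute for Hilden's eigenspace argument) from the equation $z^* = APz^*$ is where I expect the main technical work to lie.
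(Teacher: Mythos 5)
Your proposal follows Lomonosov's original Schauder fixed-point route, but it stops exactly at the step where antilinearity is fatal to that route, and the fallback you sketch does not close the gap. After Schauder you obtain $z^*=Aq(A)z^*$ with $q(A)=\sum_i\beta_i(z^*)T_i\in\langle A\rangle$ (fine so far: the $\beta_i(z^*)$ are real, so $q(A)$ is again a complex polynomial in $A$). But $Aq(A)$ is a genuinely real linear compact operator, so its fixed-point set $\ker(Aq(A)-I)$ is only a \emph{real} linear subspace, and since $A$ does not commute with $Aq(A)$ (only the twisted relation $Aq(A)=\overline q(A)A$ holds), there is no analogue of the step where Hilden/Lomonosov transfer invariance of that eigenspace to $A$ and extract an eigenvector. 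Your proposed repair via $A^2$ is correct as far as it goes --- any eigenvector $x$ of $A^2$ (for \emph{any} eigenvalue, including $0$) gives the invariant subspace $\spn_\Complex\{x,Ax\}$, and $A^2=0$ gives $\ker A$ --- but it misses the genuinely hard case: $A^2$ may be a nonzero, injective, quasinilpotent compact operator with no eigenvalues at all (then $\ker A=\{0\}$ as well), and nothing in the fixed-point equation produces an eigenvalue of $A^2$ in that situation. So the proof is not complete.

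The paper avoids this dead end by using Hilden's iterative version instead of Schauder. From the finite subcover $\p_0$ one builds a sequence with $\|p_n(A)A\cdots p_1(A)Ax_0-x_0\|<1$, then uses the relation $Ap(A)=\overline p(A)A$ to push all copies of $A$ to the right, obtaining $\|p_n(A)A\cdots p_1(A)A\|\leq\|(MA)^n\|$ with $M=\max\{\|p(A)\|,\|\overline p(A)\|:p(A)\in\p_0\}$. The counter-assumption forces the complex linear compact operator $(MA)^2$ to have no eigenvalues (otherwise $\spn_\Complex\{x,Ax\}$ would be invariant --- this is precisely your $A^2$ observation, used in the right place), hence it is quasinilpotent and $\|(MA)^n\|\to 0$, contradicting $\|x_0\|>1$. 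In other words, Hilden's argument consumes the "no eigenvalues of $A^2$" case through the spectral radius formula rather than needing an eigenvector, which is exactly the ingredient your Schauder-based plan cannot supply. I would recommend restructuring along these lines; the pieces you have identified (the twisted commutation rule and the role of $\spn_\Complex\{x,Ax\}$) are the right ones, but they fit the iterative scaffolding, not the fixed-point one.
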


\begin{proof}
Assume the contrary, that there is no such subspace. If $\lambda\in\Complex$ is an eigenvalue of $A$ with the corresponding eigenvector $x\in\h$, then $\spn_\Complex \{x\}$ provides a contradiction. Thus it must be that $\sigma(A)=\{0\}$, see \cite[Theorem 2.21]{reallinearoperator}. Without loss of generality, we may assume $\|A\|=1$. 

Let $x_0\in\h$ be such that $\|Ax_0\|>1$. Then we have 
\begin{equation}\label{lomox0ineq}
\|x_0\| = \|A\|\|x_0\| \geq \|Ax_0\| > 1
\end{equation}
from which it follows that $0\notin \overline B_1(x_0)$, the closed unit ball with centre at $x_0$.  Moreover, we have $\overline{A(\overline B_1(x_0))} \subset \overline B_1(Ax_0)$, the closed unit ball with centre at $Ax_0$, because for all $x\in\overline B_1(x_0)$ it holds $\|Ax-Ax_0\| \leq \|A\| \|x-x_0\| \leq 1$. Thus, $0\notin\overline{A(\overline B_1(x_0))}$ since $\|Ax_0\|>1$. 

For all nonzero $y\in\h$, 
$$ \V_y  = \{ p(A) y : p(A) \text{ is a complex polynomial in }A \}$$ 
is a complex linear subspace invariant under every polynomial $p(A) \in\langle A \rangle$. Thus its closure is a complex linear closed subspace invariant under every $T\in\langle A \rangle$. As $\V_y\neq \{0\}$ it is dense in $\h$ due to the counter-assumption. It follows that for every nonzero $y\in\h$ there exists a polynomial $p(A)\in\langle A \rangle$ such that $\|p(A)y-x_0\|<1$. For each polynomial $p(A)\in\langle A \rangle$, define the open set $U_{p(A)}=\{ y\in\h : \|p(A)y-x_0\|<1\}$. The collection $\{ U_{p(A)} : p(A)\text{ is a polynomial in } A\}$ is an open cover of $\h\setminus\{0\}$ and thus of $\overline{A(\overline B_1(x_0))}$ since $0\notin \overline{A(\overline B_1(x_0))}$. By the compactness of $A$, $\overline{A(\overline B_1(x_0))}$ is compact, and there is a finite subset $\p_0 \subset \langle A\rangle$ of polynomials such that $\{U_{p(A)} : p(A) \in\p_0\}$ covers $\overline{A(\overline B_1(x_0))}$. Consequently, for every $y\in\overline{A(\overline B_1(x_0))}$, there exists a $p(A)\in\p_0$ such that $\|p(A)y-x_0\|<1$. 

Since $Ax_0\in A\overline B_1(x_0)$, there exists $p_1(A) \in \p_0$ such that $\|p_1(A) A x_0 - x_0\| < 1$. 
Thus $ A p_1(A) A x_0 \in A\overline B_1(x_0)$, and there exists $p_2(A) \in \p_0$ such that $\| p_2(A) A p_1(A) A x_0-x_0\|<1$. 
By induction, we can define a sequence $\{p_n(A)\}$ in $\p_0$ such that 
\begin{equation}\label{lomoseq}
\|p_n(A) A \cdots p_2(A) A p_1(A) A x_0-x_0\| < 1 .
\end{equation} 

For a polynomial $p(A)=\sum_{j=0}^N a_j A^j$, the polynomial with conjugated coefficients $\overline p(A)=\sum_{j=0}^N \overline{a_j} A^j$ is the unique polynomial that satisfies $A p(A)=\overline p(A) A$. Suppose there are two polynomials $p_1(A)$ and $p_2(A)$ for which $Ap(A) = p_1(A) A=p_2(A) A$. Then $q(A)=0$, where $q(A)=p_1(A)A-p_2(A)A = \sum_{j=1}^{N+1} b_j A^j$. But the coefficients of $q(A)$ must be all zero, for if the leading coefficient $b_k\neq 0$, $1\leq k \leq N+1$, then $\spn_\Complex \{ A^j z \}_{j=0}^{k-1} $ with an arbitrary $z\neq 0$ is a complex nontrivial closed subspace under every $T \in \langle A \rangle$. This is in contradiction with the counter-assumption. 

Let $M=\max \{ \|p(A)\|, \| \overline p (A) \| : p(A) \in \p_0 \}$. Then 
\begin{equation}\label{lomolessthan} 
\| p_n(A) A \cdots p_2(A) A p_1(A) A \| = \| \tilde p_n(A) \cdots \tilde p_1(A) A^n \| \leq \| (MA)^n\| , 
\end{equation}
for all $n=1,2,\ldots$, where $\tilde p_k(A)$ is either $p_k(A)$ or $\overline p_k(A)$. 

\begin{lemma}\label{lemnolla}
It holds that $\lim_{n\to\infty} \| (MA)^n \|=0$. 
\end{lemma}
\begin{proof}
The complex linear compact operator $(MA)^2$ has no eigenvalues, and the orgin is the only spectral value. For if it had an eigenvalue $\lambda$ with the eigenvector $x$, then $\spn_\Complex \{x,Ax\}$ would be a nontrivial complex linear invariant subspace, contrary to the counter-assumption. Thus, by the spectral radius formula, it holds that 
$ \lim_{n\to\infty} \|(MA)^{2n}\|^{1/n} = 0$ which implies in particular that $\lim_{n\to\infty} \| (MA)^{2n} \|=0$. Then also $\lim_{n\to\infty} \|(MA)^{2n+1}\| =0$ which proves the claim. 
\end{proof}

Using \eqref{lomolessthan} with Lemma \ref{lemnolla}, we get $\lim_{n\to\infty} \| p_n(A) A \cdots p_1(A) A \| = 0$ which combined to \eqref{lomoseq} leads to $\|x_0\| \leq 1$, a contradiction to \eqref{lomox0ineq}, thus proving the antilinear Lomonosov's theorem. 
\end{proof}

It is not clear how Theorem \ref{thmlomo} could be strengthened. 
In the proof above it is crucial that $\langle A \rangle y$ yields a complex linear subspace. 
In general, for a compact real linear  operator $R$ a similarly defined set $\langle R \rangle y$ cannot be expected to be invariant under $R$. 
In particular, a compact real linear operator may not have any nontrivial complex linear invariant subspaces. 
An example of this is the operator
\begin{equation*}
 \begin{bmatrix} 1 & 1 \\ 0 & 1 \end{bmatrix} + \begin{bmatrix} 0 & 0 \\ 1 & 0 \end{bmatrix} \tau
\end{equation*}
on $\Complex^2$, where $\tau$ denotes the componentwise complex conjugation. 

Finally, let us comment on the connection between eigenvalues and finite dimensional invariant subspaces. 
In the complex linear case, the existence of a finite dimensional invariant subspace entails the existence of eigenvalues. 
This reasoning is not valid for real linear operators. 
An obstruction is given by the following proposition, where the minimum modulus (or injectivity modulus) is defined as $j(R) = \inf\{ \| Rx\| : \|x\| = 1 \}$.  

\begin{prop}[\cite{huhtanen:rlineigprob}] \label{noeigval}
Let a real linear operator $R=C+A$ be split as $R=\hat R + T$, where $T=\frac 12 (A-A^*)$ is the skew-adjoint part of the antilinear part of $R$ and $\hat R = C + \frac 12 (A+A^*)$. If $\|\hat R\| < j(T)$, then $R$ has no eigenvalues. 
\end{prop}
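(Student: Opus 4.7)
The plan is to assume $\lambda$ is an eigenvalue with unit eigenvector $x$ and derive the bound $\|\hat R\| \geq j(T)$, contradicting the hypothesis. From $Rx = \lambda x$ and the decomposition $R = \hat R + T$, I rearrange to $\hat R x = \lambda x - T x$. If I can show that $T x \perp x$, then $\lambda x \perp T x$ as well, and the Pythagorean identity gives
\begin{equation*}
\|\hat R\|^2 \geq \|\hat R x\|^2 = |\lambda|^2 + \|Tx\|^2 \geq \|Tx\|^2 \geq j(T)^2,
\end{equation*}
which is the desired contradiction.

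The key step, and the only nontrivial one, is verifying $(T x, x) = 0$ for every $x \in \h$. This is where both the antilinearity and the skew-adjointness of $T$ come in. For the antilinear adjoint, the defining relation is $(Tu, v) = \overline{(u, T^* v)}$, so with $T^* = -T$ one obtains $(T x, x) = -\overline{(x, T x)}$. On the other hand, the conjugate symmetry of the inner product always gives $(T x, x) = \overline{(x, T x)}$. Comparing these two identities yields $(x, T x) = -(x, T x)$, hence $(x, T x) = 0$ and consequently $T x \perp x$. I should first confirm that $T = \tfrac{1}{2}(A - A^*)$ is indeed antilinear (a difference of antilinear operators) and skew-adjoint ($T^* = \tfrac{1}{2}(A^* - A) = -T$), so the preceding computation applies.

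With $T x \perp x$ established, the Pythagorean estimate above completes the contradiction, since $\|\hat R x\| \leq \|\hat R\| \|x\| = \|\hat R\|$ by definition. No compactness or finite-rank hypothesis is required; the statement is purely a norm inequality arising from the orthogonality enforced by antilinear skew-adjointness. The main conceptual point to highlight in the write-up is the asymmetry between linear and antilinear operators: a complex linear skew-adjoint operator only forces $\re (Tx,x) = 0$, whereas the antilinear version forces the entire inner product $(Tx,x)$ to vanish, which is what makes the orthogonality trick work.
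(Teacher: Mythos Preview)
Your proof is correct and follows essentially the same route as the paper: establish $Tx\perp x$ from antilinear skew-adjointness, apply the Pythagorean identity to $\hat R x=\lambda x - Tx$, and bound above by $\|\hat R\|^2$ and below by $j(T)^2$ to reach the contradiction. You supply more detail than the paper on why $(Tx,x)=0$, but the argument is otherwise identical.
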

\begin{proof}
By the antilinear skew-adjointness of $T$, $Tx$ is always orthogonal to $x$ for any $x\in\h$. Assume that $\lambda$ is an eigenvalue of $R$ with eigenvector $x\in\h$ of unit length. Then we have by the Pythagorean theorem
$$ |\lambda|^2 + \| Tx\|^2 = \| \lambda x - Tx \|^2 = \| \hat R x\|^2 .
$$
Taking infimum on the left-hand side and supremum on the right-hand side over unit vectors, we have
$$ | \lambda |^2 + j(T)^2 \leq \| \hat R \|^2
$$
which contradicts $\| \hat R \| < j(T)$. 
\end{proof}

When $\h$ is infinite dimensional, the origin is always in the spectrum of a compact real linear operator. 
However, all points in the spectrum of a compact real linear operator outside the origin are eigenvalues. 
In this regard, to determine whether there is an associated invariant subspace Proposition \ref{noeigval} is crucial. 
Indeed, assume the Hilbert space splits into the direct sum $\h = \bigoplus_j \h_j$, where each $\h_j$ is a finite dimensional invariant subspace for the compact real linear operator $R$. 
If the assumption $\| \hat R_j \| < j(T_j)$ holds for the restrictions $R_j$ of $R$ to $\h_j$, then $R$ has no eigenvalues. 
In particular, from this it can be seen that $\sigma(R) = \{0\}$ does not imply $\lim_{n\to\infty} \| R^n\|^{1/n} = 0$ for real linear operators in general. 

For this reason, the eigenvalue problem for compact real linear  operators is challenging in general. 
Also, it underlines the spectral questions for finite rank real linear operators, which we will discuss next.

\section{Characteristic polynomials and numerical functions}

The real linear eigenvalue problem, i.e.,  characterizing the points $\lambda\in\Complex$ for which  $R-\lambda$ is singular, is a nontrivial task already in finite dimensions. 
We treat the finite dimensional case first in terms of the characteristic polynomial of $R$ and a so-called numerical function. 
Then we discuss a slightly different numerical function for real linear trace-class operators which is defined through a limit process of finite rank operators. 

Finite rank operators are best dealt with real linear matrix analysis. I.e., we assume having a represention of $R$ on $\Complex^n$ as 
\begin{equation}
 R = C+ B\tau
\end{equation}
with $C,B\in\Complex^{n\times n}$. Here $\tau$ is the componentwise complex conjugation $\tau z = \overline z$ for $z\in\Complex^n$. This representation is attained with respect to some orthonormal basis $\{e_j\}_{j=1}^n$ of the complex linear span of $R\h$.  

\begin{defn}
The characteristic polynomial of a real linear operator $R$ on $\Complex^n$ is defined as the polynomial 
\begin{equation}
p (\lambda , \overline \lambda ) = \det(R-\lambda)^\Complex 
= \det \begin{bmatrix} C -\lambda & B \\ \overline B & \overline{C-\lambda} \end{bmatrix}. 
\end{equation}
\end{defn}

The characteristic polynomial is not a complex analytic polynomial but a so-called real analytic polynomial. Calling it characteristic is justified by the fact that 
\begin{equation}
 \sigma (R) = \{ \lambda \in \Complex : p(\lambda, \overline\lambda ) =0 \}. 
\end{equation}
Polynomials defined in this way have been studied in \cite{eirolaetal:solmeth} and \cite{rlinmatan}, where they are called characteristic bivariate polynomials resulting from the separation of $\lambda=\alpha+i\beta$ into its real and imaginary parts. 

The characteristic polynomial can be cast in a canonical form involving complex analytic polynomials. 

\begin{thm}\label{basicsofcp}
The characteristic polynomial $p (\lambda, \overline\lambda)=\sum_{i,j=0}^{n} h_{ij} \lambda^j \overline\lambda^i$ of a real linear operator $R$ on $\Complex^n$ satisfies the following: 
\begin{enumerate}[(i)]
\item 
$p$ is real valued. 
\item 
The coefficient matrix $H=[h_{ij}]_{i,j=0}^n$ is Hermitian. 
\item 
Let $U=[u_{ij}]_{i,j=0}^{n}\in\Complex^{(n+1)\times (n+1)}$ be unitary such that $UHU^*$ is diagonal. Then $p$ can be cast in the form
\begin{equation}\label{canonform}
p (\lambda, \overline\lambda ) = \sum_{i=0}^{n} d_i |p_i(\lambda)|^2 ,
\end{equation}
where $d_i\in\Real$ and 
$$
p_i(\lambda) = \sum_{j=0}^n u_{ij} \lambda^j \qquad \text{for } i=0,1,\ldots,n .
$$ 
The polynomials $p_i$ have no common zeros. 
\item 
If $H$ is positive definite, then $p $ can be cast in the form 
\[ 
p ( \lambda , \overline\lambda ) = \sum_{i=0}^n | p_i(\lambda) |^2 , 
\]
where each $p_i$ is a complex polynomial of degree $i$. The polynomials $p_i$ have no common zeros, and consequently, $\sigma(R) = \emptyset$.   
\item 
If $\det R^\Complex \leq 0$, then $p (\lambda, \overline\lambda)$ has a zero, and consequently, $\sigma(R) \neq\emptyset$.  
\item 
For the adjoint $R^*$, it holds $p_{R^*}(\lambda, \overline\lambda) = p (\overline\lambda, \lambda )$, i.e., the coefficients of $p_{R^*}$ are complex conjugates of $p $. In particular, the characteristic polynomial of a self-adjoint operator has real coefficients. 
\end{enumerate}
\end{thm}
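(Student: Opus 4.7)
The plan is to build everything on the representation $p(\lambda,\overline\lambda) = v^{*}Hv$ with $v = [\lambda^{j}]_{j=0}^{n}$, stated in the introduction. For (i) and (ii) together, I would use the block permutation $P = \left[\begin{smallmatrix} 0 & I \\ I & 0 \end{smallmatrix}\right]$, which satisfies $P\,\overline{(R-\lambda)^{\Complex}}\,P = (R-\lambda)^{\Complex}$: the off-diagonal blocks $B$ and $\overline B$ swap, while the diagonal blocks $C-\lambda$ and $\overline{C-\lambda}$ exchange under entrywise conjugation. Taking determinants gives $\overline p = p$, so $p$ is real-valued; matching coefficients of $\lambda^{j}\overline\lambda^{i}$, treated as independent indeterminates, then forces $h_{ij} = \overline{h_{ji}}$, which is Hermiticity.

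For (iii), the spectral theorem gives $H = U^{*}DU$ with $D = \operatorname{diag}(d_{0},\ldots,d_{n})$, $d_{i}\in\Real$, whence
\[ p(\lambda,\overline\lambda) = (Uv)^{*}D(Uv) = \sum_{i=0}^{n} d_{i}\,|p_{i}(\lambda)|^{2}, \]
with $p_{i}$ the $i$-th component of $Uv$, matching the statement. A common zero $\lambda_{0}$ of all the $p_{i}$ would force $Uv(\lambda_{0}) = 0$, hence $v(\lambda_{0}) = 0$ by invertibility of $U$; but $v(\lambda_{0})$ has first entry $1$, contradiction. For (iv), positive definiteness admits a Cholesky factorization $H = LL^{*}$ with $L$ upper triangular and positive diagonal; then
\[ p(\lambda,\overline\lambda) = \|L^{*}v\|^{2} = \sum_{i=0}^{n}|p_{i}(\lambda)|^{2}, \qquad p_{i}(\lambda) = (L^{*}v)_{i} = \sum_{j\leq i}\overline{l_{ji}}\,\lambda^{j}, \]
which has degree exactly $i$ since $l_{ii}>0$. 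The same argument gives no common zeros, and strict positivity of $p$ on $\Complex$ yields $\sigma(R) = \emptyset$.

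For (v), I would combine $p(0,0) = \det R^{\Complex} \leq 0$ with the asymptotic $p(\lambda,\overline\lambda)\to +\infty$ as $|\lambda|\to\infty$, obtained by factoring $-\lambda$ from the first $n$ block rows and $-\overline\lambda$ from the last $n$ rows of $(R-\lambda)^{\Complex}$: this gives $p = (-\lambda)^{n}(-\overline\lambda)^{n}\det(I+o(1)) = |\lambda|^{2n}(1+o(1))$, so $p>0$ outside a compact set. Real-valuedness and continuity of $p$ on the connected set $\Complex$ then produce a zero by the intermediate value theorem applied on a ray. For (vi), the decisive step is the identity $(R^{*})^{\Complex} = (R^{\Complex})^{*}$, which reduces to $(A\kappa)^{*} = \kappa A^{*}$ and $(\kappa A)^{*} = A^{*}\kappa$; both follow from the antilinear-adjoint identity $(Ax,y) = \overline{(x,A^{*}y)}$ together with $\kappa$ being an involutive antilinear isometry. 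Using $\operatorname{diag}(\lambda I, \overline\lambda I)^{*} = \operatorname{diag}(\overline\lambda I, \lambda I)$,
\[ p_{R^{*}}(\lambda,\overline\lambda) = \det\!\bigl((R^{\Complex} - \operatorname{diag}(\overline\lambda I, \lambda I))^{*}\bigr) = \overline{p(\overline\lambda,\lambda)} = p(\overline\lambda,\lambda), \]
the last equality by (i); self-adjointness then forces $h_{ij}=h_{ji}$, and together with Hermiticity this yields $h_{ij}\in\Real$.

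The main technical obstacle will be pinning down the asymptotic in (v) precisely enough to justify applying the intermediate value theorem, and keeping the conjugations straight in (vi) when translating between $(R^{*})^{\Complex}$ and $(R^{\Complex})^{*}$ through the antilinear adjoint.
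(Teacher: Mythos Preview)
Your proposal is correct and follows essentially the same route as the paper's proof: the block permutation for (i)--(ii), spectral diagonalization for (iii), Cholesky for (iv), leading-term asymptotics plus the intermediate value theorem for (v), and the adjoint identity for (vi). The only cosmetic differences are that in (iv) the paper first conjugates $H$ by the order-reversing permutation before applying the standard Cholesky form $T^{*}T$ (your direct use of an upper-triangular $L$ in $H=LL^{*}$ achieves the same thing without the permutation), and in (vi) the paper verifies $p_{R^{*}}(\lambda,\overline\lambda)=p(\overline\lambda,\lambda)$ by an explicit transpose-plus-block-swap on the $2n\times 2n$ matrix rather than via the abstract identity $(R^{*})^{\Complex}=(R^{\Complex})^{*}$ you invoke; both arguments are equivalent once unwound.
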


\begin{proof}
By properties of the determinant, the characteristic polynomial is of degree $2n$ and can be represented as 
\begin{equation}
p (\lambda, \overline\lambda) = \sum_{i,j=0}^n h_{ij} \lambda^j \overline\lambda^i = v_\lambda^* H v_\lambda, 
\end{equation}
where $v_\lambda = (1,\lambda,\lambda^2,\ldots,\lambda^n) \in \Complex^{n+1}$ and  $H=[h_{ij}]_{i,j=0}^n\in\Complex^{(n+1)\times (n+1)}$ is the coefficient matrix. 
\begin{enumerate}[(i)]
\item 
Since 
\begin{align*}
p ( \lambda, \overline \lambda) 
&= 
\det \begin{bmatrix} C -\lambda & B \\ \overline B & \overline{C-\lambda} \end{bmatrix} 
\\
&
= 
\det \begin{bmatrix} 0 & I \\ I & 0 \end{bmatrix} 
\begin{bmatrix} C -\lambda & B \\ \overline B & \overline{C-\lambda} \end{bmatrix} 
\begin{bmatrix} 0 & I \\ I & 0 \end{bmatrix}
= 
\overline{ p (\lambda, \overline\lambda) } ,
\end{align*}
$p $ is real valued.

\item 
Since $p (\lambda, \overline \lambda)$ is real valued, the two polynomials 
$$
p (\lambda, \overline \lambda) = \sum_{i,j=0}^n h_{ij} \lambda^j \overline \lambda^i
$$ 
and  
$$
\overline{p (\lambda, \overline \lambda )} = \sum_{i,j=0}^n \overline{h_{ij}} \overline \lambda^j \lambda^i=\sum_{i,j=0}^n \overline{h_{ji}} \lambda^j \overline \lambda^i
$$ 
are identical for all $\lambda\in\Complex$. Then the coefficients corresponding to identical powers $\lambda^j\overline\lambda^i$ must also be identical, i.e., $h_{ij} = \overline{h_{ji}}$. 

\item 
As the coefficient matrix is Hermitian, we have $H = U^*DU$ with $U=[u_{ij}]$ unitary and $D$ diagonal with real diagonal entries $d_i$. Then 
$$ 
p (\lambda, \overline \lambda) = v_\lambda^* H v_\lambda = (Uv_\lambda)^* D U v_\lambda = \sum_{i=0}^n d_i|p_i(\lambda)|^2 ,
$$
where $p_i(\lambda) = \sum_{j=0}^n u_{ij} \lambda^j$. The polynomials cannot have common zeros since, if there exists  $\mu\in\Complex$ such that $p_i(\mu)=0$ for all $i=0,\ldots,n$, then $U v_\mu = 0$ contradicting the unitarity of $U$.  

\item 
Let $P$ be the order reversing permutation matrix $(x_0,x_1,\ldots,x_n) \mapsto (x_n, \ldots, x_1, x_0)$. Then $p (\lambda, \overline \lambda) = u_\lambda^* P^* H P u_\lambda$, where $u_\lambda = P v_\lambda = (\lambda^n, \cdots, \lambda, 1)$. The matrix $\tilde H = P^* H P$ is  Hermitian and positive definite so that it has a Cholesky decomposition as $\tilde H = T^*T$, where $T$ is upper triangular with strictly positive diagonal elements. Then 
$$ p (\lambda, \overline \lambda) = u_\lambda^* T^* T u_\lambda  = (T u_\lambda)^* T u_\lambda = \sum_{i=0}^n |p_i(\lambda)|^2 ,$$
where $p_i$ is of degree $i$. The existence of a zero common to all of the polynomials $p_i$ would contradict the invertibility of $T$.  Then $p(\lambda, \overline\lambda )>0$ for all $\lambda\in\Complex$, or equivalently $\sigma(R) = \emptyset$. 

\item 
The leading term of $p(\lambda, \overline \lambda)$ is $|\lambda|^{2n}$. Restricted to the real line $f(r)=p(r,r)$ is a continuous function with $f(0)=\det R^{\Complex} \leq 0$ and $f(r) \to +\infty$ as $r\to\infty$ which proves the claim. 

\item 
By the fact that taking the transpose leaves the determinant unchanged and the characteristic polynomials are real valued, it holds 
\begin{align*}
 p (\overline\lambda, \lambda) 
&= \det
\begin{bmatrix}
 C - \overline\lambda & B \\
\overline B & \overline C - \lambda
\end{bmatrix}
= \det \begin{bmatrix}
   C^* - \lambda & B^T \\
  B^* & \overline{C^*} - \overline\lambda
  \end{bmatrix}
\\& =p_{R^*}(\lambda, \overline\lambda). 
\end{align*}
Denoting the coefficient matrix of $p_{R^*}$ by $[r_{ij}]$, it follows that 
$$ 
p_{R^*} (\lambda, \overline\lambda) 
= \sum_{i,j=0}^n r_{ij} \lambda^j \overline\lambda^i 
= \overline{p (\overline\lambda , \lambda ) } 
= \sum_{i,j=0}^n \overline{h_{ij}} \lambda^j \overline\lambda^i
$$ 
for all $\lambda\in\Complex$. Thus $r_{ij} = \overline{h_{ij}}$. Evidently then, for self-adjoint $R$, the coefficients are real. \qedhere
\end{enumerate}
\end{proof}

Theorem \ref{basicsofcp} above connects the characteristic polynomials of real linear operators to the study of positive polynomials and sums of squares which is classical. 
Sum of squares decompositions arise in classical moment problems, real algebraic geometry, optimization, and control theory problems. 
See \cite{heltonputinar:positive} for a thorough exposition. 

The coefficient matrix $H$ cannot be negative definite since the leading coefficient is $h_{nn}=e_n^*He_n=1$, where $e_n=(0,\ldots,0,1)$. 
Indefiniteness of $H$ is a necessary condition for the nonemptiness of $\sigma(R)\setminus\{0\}$ but not sufficient. 
An example of such an operator is 
\begin{equation}\label{countertofov}
 \begin{bmatrix}
  0 & A \\ A & 0
 \end{bmatrix}
\quad\text{with}\quad 
A = \begin{bmatrix} \sqrt{\frac{1+\eps}{2}} & \sqrt{\frac{1-\eps}{2}} \\ -\sqrt{\frac{1-\eps}{2}} & \sqrt{\frac{1+\eps}{2}}
    \end{bmatrix} 
\text{ and } 0<\eps<1.
\end{equation}
The characteristic polynomial is then $p(\lambda,\overline\lambda) = |\lambda|^4 -2\eps|\lambda|^2 + 1$ for which the coefficient matrix is diagonal with entries $1,-2\eps,1$. 

Recall that the field of values (or synonymously numerical range) of a matrix $A\in\Complex^{n\times n}$ is the closed convex subset of the complex plane
$$
 W(A) = \{ x^*Ax : \|x\|=1 \}. 
$$
For the field of values, see, e.g. \cite{hornjohnson1}. 
By being the closed interval between the smallest and largest eigenvalue of $H$, the field of values of a Hermitian matrix $H$ is easily computed.


In terms of the field of values, Theorem \ref{basicsofcp} clarifies that, strictly speaking, any given Hermitian matrix does not give rise to the characteristic polynomial of a finite rank real linear operator. 
Furthermore, on $\Complex$ the real linear operator $\alpha+\beta\tau$ with $\alpha,\beta\in\Complex$ has the characteristic polynomial $p(\lambda, \overline\lambda) = |\alpha|^2-|\beta|^2 - \overline\alpha \lambda - \alpha\overline\lambda + \lambda\overline\lambda$. Thus, there are Hermitian matrices that are not scalable or translatable to the coefficient matrix of a characteristic polynomial. 

To connect further the field of values of the coefficient matrix $H$ to the characteristic polynomial $p$ of a real linear operator $R$, the following normalization is introduced. 

\begin{defn}
Let $R$ be a real linear operator on $\Complex^n$ with the characteristic polynomial $p$. 
Then 
$$
F  :\Complex \to \Real, \quad F (\lambda) = \frac{ p (\lambda, \overline\lambda) }{ \sum_{j=0}^n |\lambda|^{2j} } = \frac{ v_\lambda^* H v_\lambda }{ \|v_\lambda\|^2}  
$$
is said to be the numerical function of $R$, where $v_\lambda=(1,\lambda,\ldots,\lambda^n)$. 
\end{defn}

Based on the properties of the characteristic polynomial, we have the following. 

\begin{thm}\label{specialF}
Let $F  :\Complex \to\Real$ be the numerical function of a real linear operator $R$ on $\Complex^n$, and let the coefficient matrix of the characteristic polynomial be diagonalized as $H=U^*DU$ with $D$ diagonal and $U$ unitary. Then the following hold:
\begin{enumerate}[(i)]
\item 
$F $ is continuous and its range is a subset of $W(H)$. 
\item 
$F (\lambda)=0$ if and only if $p (\lambda, \overline\lambda)=0$. 
\item 
For every $\lambda\in\Complex$, $F(\lambda)$ is a convex combination of eigenvalues of $H$,
\begin{equation}\label{convcomb}
F(\lambda) = \sum_{i=0}^n d_i \frac{ |p_i(\lambda)|^2 }{ \sum_{j=0}^n |p_j(\lambda)|^2 }, 
\end{equation}
where the polynomials $p_i$ and the real numbers $d_i$ are as in Theorem \ref{basicsofcp} (iii). 
\item 
In particular, 
\begin{align*}
F(0) 
&= \det R^\Complex = \sum_{i=0}^n d_i |u_{i0}|^2, 
\\
\lim_{|\lambda|\to\infty} F(\lambda) 
&=  \sum_{i=0}^n d_i |u_{in}|^2 = 1 .
 \end{align*}
so that
$$
[ \det R^\Complex , 1 )   \subset F  (\Complex) \subset W(H) = [\lambda_{\min} (H), \lambda_{\max} (H) ], 
$$
where 
$\lambda_{\min}(H)$ and $\lambda_{\max}(H)$ are the smallest and greatest eigenvalue of $H$. 
\end{enumerate}
\end{thm}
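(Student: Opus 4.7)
The central observation on which everything will hang is that, because $U$ is unitary, $\|Uv_\lambda\| = \|v_\lambda\|$, so
\[
\sum_{j=0}^n |\lambda|^{2j} = \|v_\lambda\|^2 = \|Uv_\lambda\|^2 = \sum_{i=0}^n |p_i(\lambda)|^2,
\]
with $p_i(\lambda) = \sum_j u_{ij}\lambda^j$ as in Theorem \ref{basicsofcp}(iii). Thus $F(\lambda)$ is a genuine Rayleigh quotient $v_\lambda^* H v_\lambda/\|v_\lambda\|^2$ evaluated at the unit vector $v_\lambda/\|v_\lambda\|$, which lives on the Veronese-type curve $\{v_\lambda/\|v_\lambda\|\}$. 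Most of the statement is then a direct reading of this Rayleigh quotient together with Theorem \ref{basicsofcp}.

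For (i), continuity is immediate, since both $p(\lambda,\overline\lambda)$ and $\sum_j|\lambda|^{2j}\ge 1$ are continuous in $(\lambda,\overline\lambda)$ and the denominator never vanishes; and the inclusion $F(\Complex)\subset W(H)$ is the definition of the field of values applied to the unit vector $v_\lambda/\|v_\lambda\|$. For (ii), positivity of the denominator gives the equivalence at once. For (iii), apply the diagonalization form $p(\lambda,\overline\lambda)=\sum_i d_i|p_i(\lambda)|^2$ from Theorem \ref{basicsofcp}(iii), divide by $\sum_j|\lambda|^{2j}=\sum_i|p_i(\lambda)|^2$, and observe that the weights $|p_i(\lambda)|^2/\sum_j|p_j(\lambda)|^2$ are nonnegative and sum to $1$ (they are well-defined everywhere because the $p_i$ have no common zero by Theorem \ref{basicsofcp}(iii)), so $F(\lambda)$ is a convex combination of the real numbers $d_i$, i.e.\ of the eigenvalues of $H$.

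For (iv), evaluation at $\lambda=0$ gives $v_0=e_0$, so $F(0)=h_{00}=\det R^\Complex$ on the one hand and $\sum_i d_i|u_{i0}|^2$ on the other, since $p_i(0)=u_{i0}$. For the limit, note that as $|\lambda|\to\infty$, both $|p_i(\lambda)|^2$ and $\sum_j|\lambda|^{2j}$ are dominated by their top-degree terms, yielding $|p_i(\lambda)|^2/\sum_j|p_j(\lambda)|^2\to|u_{in}|^2/\sum_j|u_{jn}|^2=|u_{in}|^2$ (the last column of $U$ being a unit vector); hence the limit is $\sum_i d_i|u_{in}|^2 = e_n^* H e_n = h_{nn}=1$, which is consistent with the fact already recorded in the proof of Theorem \ref{basicsofcp}(v) that the leading term of $p$ is $|\lambda|^{2n}$.

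The inclusion $[\det R^\Complex, 1)\subset F(\Complex)\subset W(H)=[\lambda_{\min}(H),\lambda_{\max}(H)]$ then follows by a connectedness argument: $\Complex$ is connected, $F$ is continuous real-valued, so $F(\Complex)$ is an interval; it contains $\det R^\Complex=F(0)$ and has $1$ as a limit value along any ray $|\lambda|\to\infty$, so by the intermediate value theorem every $c\in(\det R^\Complex,1)$ lies in $F(\Complex)$ (choose $\lambda_1=0$ and $\lambda_2$ of sufficiently large modulus so that $F(\lambda_2)>c$, and apply IVT along the line segment). The outer inclusion in $W(H)$ is (i). The only mild subtlety is ensuring that $1$ is indeed a limit of $F$ (not just of a subsequence along one direction), and that the argument does not need $1\in F(\Complex)$; both are handled cleanly by the leading-term asymptotics and the openness of the right endpoint in $[\det R^\Complex,1)$. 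No serious obstacle arises: the entire result is an unpacking of the unitary change of basis together with Theorem \ref{basicsofcp}.
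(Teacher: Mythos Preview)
Your proof is correct and follows essentially the same route as the paper: both identify $F$ as the Rayleigh quotient $v_\lambda^* H v_\lambda/\|v_\lambda\|^2$, use the unitarity of $U$ to rewrite the denominator as $\sum_i |p_i(\lambda)|^2$, and read off the convex combination and the limiting values from the leading coefficients. Your treatment of (iv) is slightly more explicit than the paper's (spelling out the intermediate value argument for the inclusion $[\det R^\Complex,1)\subset F(\Complex)$ and the identity $\sum_i d_i|u_{in}|^2 = e_n^*He_n = h_{nn}$), but there is no substantive difference in method.
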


\begin{proof}
\begin{enumerate}[(i)]
\item 
From the form of $F $ it is seen that $F $ is continuous from $\Complex$ to $\Real$. For any $\lambda\in\Complex$ 
$$ 
F(\lambda) = \left(\frac{ v_\lambda }{ \|v_\lambda\| } \right)^* H \left(\frac{ v_\lambda }{ \|v_\lambda\| } \right) \in W(H).
$$
\item 
As $ (\sum_{j=0}^n |\lambda|^{2j})^{-1} > 0$ for all $\lambda\in\Complex$, $p $ vanishes exactly when $F $ vanishes. 
\item 
Let $p (\lambda, \overline\lambda)= \sum_{i=0}^{n} d_i |p_i(\lambda)|^2 $ be the sum representation of Theorem \ref{basicsofcp} (iii). Since $U$ is unitary, there holds $\sum_{i=0}^n |\lambda|^{2j} = \sum_{i=0}^n |p_i(\lambda)|^2$. Thus 
\begin{equation*}
F(\lambda) = \sum_{i=0}^n d_i \frac{ |p_i(\lambda)|^2 }{ \sum_{j=0}^n |p_j(\lambda)|^2 }, 
\end{equation*}
which is a convex combination of eigenvalues $\{d_i\}$ of $H$. 
\item From the definition of $F $ it is seen that $F(0) = \det R^\Complex$ and from Equation \eqref{convcomb} that 
$$ F(0) = \sum_{i=0}^n d_i |u_{i0}|^2 .
$$
It holds 
$$
\lim_{|\lambda|\to\infty} F(\lambda) = \frac{ p (\lambda, \overline\lambda) }{ \sum_{j=0}^n|\lambda|^{2j} } = \frac{|\lambda|^{2n} + (\text{lower order terms}) }{ |\lambda|^{2n} + (\text{lower order terms}) } = 1.
$$
On the other hand $\frac{ |p_i(\lambda)|^2 }{ \sum_{j=0}^n |\lambda|^{2j} } \to  |u_{in}|^2$ as $|\lambda| \to \infty$ so that
$$
\lim_{|\lambda|\to\infty} F(\lambda) = \sum_{i=0}^n d_i |u_{in}|^2 ,
$$ 
which proves the equality. 

As $F $ is continuous and $H$ is Hermitian, we have the following inclusions: 
\begin{equation*}
[ \det R^\Complex , 1 )   \subset F  (\Complex) \subset W(H) = [\lambda_{\min} (H), \lambda_{\max} ( H)] . \qedhere
\end{equation*}
\end{enumerate}
\end{proof}

The numerical function $F$ can be defined for an arbitrary Hermitian matrix $H$ or even for any matrix $A$. 
As exemplified by \eqref{countertofov}, the closure of $F(H)$ need not equal $W(H)$. 
It is of independent interest to inquire how much of the field of values $W(H)$ remains uncovered by $F(\Complex)$ in each case. 
Numerical evidence suggests that some always remain. 
The interest stems from the fact that calculating the range of $F(\Complex )$ is computationally a one dimensional problem whereas the field of values is defined on the unit sphere of $\Complex^n$.

In the subsequent defintion of the numerical function for trace-class operators, we will need the following rotation property. 

\begin{prop}
Let $F$, $F_{-\theta}$ be the numerical functions and $p$, $p_{-\theta}$ the characteristic polynomials of finite rank real linear operators  $C+B\tau$ and $e^{-i\theta} C + B\tau$, where $\theta\in\Real$. Then it holds for all $r\geq 0$
\begin{equation*}
F (e^{i\theta}r )= F_{-\theta} (r) \quad\text{and}\quad p ( e^{i\theta} r , e^{-i\theta} r ) = p_{-\theta} ( r , r ) . 
\end{equation*}
\end{prop}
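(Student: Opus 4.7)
The plan is to work directly with the determinantal definition of the characteristic polynomial in~\eqref{complexification}, reducing the first identity to an algebraic manipulation of a single block $2n\times 2n$ matrix, and then to derive the identity for $F$ as an immediate corollary, since its denominator depends only on $|\lambda|$.

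First I would write out both sides using the definition:
\[
p(e^{i\theta}r, e^{-i\theta}r) = \det\begin{bmatrix} C - e^{i\theta}r & B \\ \overline{B} & \overline{C} - e^{-i\theta}r \end{bmatrix},
\qquad
p_{-\theta}(r,r) = \det\begin{bmatrix} e^{-i\theta}C - r & B \\ \overline{B} & e^{i\theta}\overline{C} - r \end{bmatrix}.
\]
The goal is then to transform the first block matrix into the second via determinant-preserving operations. The natural first step is to multiply the first matrix on the left by the block-diagonal matrix $D_1 = \mathrm{diag}(e^{-i\theta}I_n,\, e^{i\theta}I_n)$. Since $\det D_1 = e^{-in\theta} \cdot e^{in\theta} = 1$, the determinant is preserved, and the result is
\[
\begin{bmatrix} e^{-i\theta}C - r & e^{-i\theta}B \\ e^{i\theta}\overline{B} & e^{i\theta}\overline{C} - r \end{bmatrix}.
\]
The diagonal blocks are now exactly what we want, but the off-diagonal blocks carry the wrong phase factors $e^{\mp i\theta}$.

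The second step is to apply a similarity by $D_2 = \mathrm{diag}(I_n,\, e^{i\theta}I_n)$, i.e.\ to conjugate by $D_2^{-1}(\cdot)D_2$. Similarity leaves the determinant invariant, leaves the diagonal blocks unchanged, and multiplies the upper-right block by $e^{i\theta}$ and the lower-left block by $e^{-i\theta}$. Thus the $e^{-i\theta}B$ becomes $B$ and the $e^{i\theta}\overline{B}$ becomes $\overline{B}$, delivering precisely the matrix that defines $p_{-\theta}(r,r)$. This proves the polynomial identity.

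For the numerical function, note that $|e^{i\theta}r| = r$ for $r \geq 0$, so
\[
\sum_{j=0}^n |e^{i\theta}r|^{2j} = \sum_{j=0}^n r^{2j},
\]
and hence
\[
F(e^{i\theta}r) = \frac{p(e^{i\theta}r, e^{-i\theta}r)}{\sum_{j=0}^n |e^{i\theta}r|^{2j}} = \frac{p_{-\theta}(r,r)}{\sum_{j=0}^n r^{2j}} = F_{-\theta}(r).
\]
There is no genuine obstacle here; the only mildly subtle point is the bookkeeping for the two block operations, choosing $D_1$ so that its own determinant is $1$ (so we do not have to compensate), and choosing $D_2$ as a similarity so that we pay no determinant price for correcting the off-diagonal phases. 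Everything else is a direct computation.
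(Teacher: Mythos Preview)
Your proof is correct and follows essentially the same approach as the paper: both arguments reduce the identity to a block-diagonal scaling of the $2n\times 2n$ complexification, exploiting that the scaling has determinant~$1$. The only cosmetic difference is that the paper does it in a single symmetric step, multiplying on both sides by $\mathrm{diag}(e^{i\theta/2}I,\,e^{-i\theta/2}I)$, whereas you split the manipulation into a one-sided multiplication by $D_1$ followed by a similarity by $D_2$; the derivation of the $F$-identity from the rotation-invariance of the denominator is identical.
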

\begin{proof}
By properties of the determinant,
\begin{align*}
p_{-\theta} (r,r)
&= \det \begin{bmatrix} e^{i\theta/2}I \\  & e^{-i\theta/2} I \end{bmatrix} 
 \begin{bmatrix} e^{-i\theta}C - r & B \\ \overline B & e^{i\theta} \overline C -r \end{bmatrix}  
\begin{bmatrix} e^{i\theta/2}I \\  & e^{-i\theta/2} I \end{bmatrix}
\\&
= p (e^{i\theta} r , e^{-i\theta} r ) . 
\end{align*}
As $v_\lambda^*v_\lambda$ with $\lambda = e^{i\theta} r$ is indepenent of the argument $e^{i\theta}$, the identity holds for the numerical function as well. 
\end{proof}


Recall that for a complex linear trace-class operator $C$ the determinant can be extended in the following way
\begin{equation*}
\det (I+C) = \lim_{n\to\infty} \det (I+C_n) ,
\end{equation*}
where $\|C_n-C\|_1\to 0$ as $n\to\infty$. Furthermore, the function 
$
f : \Complex \to \Complex$,   $f(z) =  \det (I + zC)
$ 
is entire and depends continuously on $C$ in the sense 
$$ 
| \det(I + C_1) - \det (I + C_2) | \leq \| C_1-C_2\|_1 \exp (1+ \|C_1\|_1 + \|C_2\|_1 ), 
$$
where $C_1$ and $C_2$ are complex linear trace-class operators. Also, $I+C$ is boundedly invertible if and only if $\det(I+C) \neq 0$. See \cite[Section XI.9]{DS2} or  \cite[Chapter II]{GGK:det} for details. 
This allows to define the following. 

\begin{defn}
Let $R=C+A$ be a real linear trace-class operator on $\h$. Define its characteristic function for $\lambda = r e^{i\theta} \neq 0$ as 
$$ \varphi (r e^{i\theta}) = \det \left[ I - r^{-1} (e^{-i\theta} C + A)  \right]^\Complex .
$$
\end{defn}

\begin{prop}\label{limitbehav}
The characteristic function $\varphi $ of a real linear trace-class operator on $\h$ is continuous on $\Complex\setminus\{0\}$ and its zeros are the eigenvalues of $R$. 
Furthermore, for all $\lambda\neq 0$
$$ 
 \varphi (\lambda)  = \lim_{n\to\infty} \frac{p_{n}(\lambda, \overline \lambda)}{ |\lambda|^{2n} } ,  
$$
where $R_n=C_n+A_n$ are finite rank operators with characteristic polynomials $p_n(\lambda, \overline\lambda )$ such that $\|R_n-R\|_1\to 0$. 
\end{prop}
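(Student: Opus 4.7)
The plan is to recast $\varphi$ as a genuine Fredholm determinant on $\h\oplus\h$ and then combine standard trace-class theory with a single block-matrix identity. Since $C,A\in\B_1(\h)$, Proposition~\ref{oplusequiv} gives $[r^{-1}(e^{-i\theta}C+A)]^\Complex\in\B_1(\h\oplus\h)$, so that
\[
\varphi(\lambda) \;=\; \det\bigl(I-[r^{-1}(e^{-i\theta}C+A)]^\Complex\bigr)
\]
is the ordinary Fredholm determinant of identity-plus-trace-class. Continuity on $\Complex\setminus\{0\}$ then follows by composition: the map $\lambda=re^{i\theta}\mapsto r^{-1}(e^{-i\theta}C+A)$ is trace-norm continuous away from the origin, complexification is trace-norm continuous by Proposition~\ref{oplusequiv}, and the Fredholm determinant is trace-norm continuous on $I+\B_1(\h\oplus\h)$.

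For the zero set the key step is the block-matrix identity
\[
(R-\lambda)^\Complex \;=\; \begin{bmatrix} -\lambda I & 0 \\ 0 & -\bar\lambda I \end{bmatrix}\,D\,\bigl[I-r^{-1}(e^{-i\theta}C+A)\bigr]^\Complex\,D^{-1},
\]
with $D=\operatorname{diag}(e^{-i\theta/2}I,\,e^{i\theta/2}I)$. One verifies this by factoring $-\lambda$ from the first block row and $-\bar\lambda$ from the second block row of $(R-\lambda)^\Complex$ and observing that conjugation by $D$ multiplies the top-right block by $e^{-i\theta}$ and the bottom-left block by $e^{i\theta}$, which is exactly what converts $-r^{-1}A\kappa$ and $-r^{-1}\kappa A$ into $-\lambda^{-1}A\kappa$ and $-\bar\lambda^{-1}\kappa A$. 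Since $\lambda\neq 0$ makes the diagonal prefactor and $D$ invertible, $\varphi(\lambda)=0$ iff $(R-\lambda)^\Complex$ is not boundedly invertible, iff $R-\lambda$ is not invertible as a real linear operator. Since $R$ is compact and $\lambda\neq 0$, this is precisely the statement that $\lambda$ is an eigenvalue of $R$.

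The same identity drives the limit formula. Specializing to a finite-rank $R=C+B\tau$ of rank $N$ and taking ordinary matrix determinants gives $p(\lambda,\bar\lambda)=|\lambda|^{2N}\varphi(\lambda)$, once one observes that for identity plus a finite-rank operator the Fredholm determinant on $\h\oplus\h$ collapses to the ordinary determinant on any finite-dimensional subspace containing the range. Applied to each approximating $R_n$, this yields $\varphi_{R_n}(\lambda)=p_n(\lambda,\bar\lambda)/|\lambda|^{2N_n}$. Combining $\|R_n-R\|_1\to 0$ with the trace-norm continuity of $\varphi$ from the first paragraph then delivers $p_n(\lambda,\bar\lambda)/|\lambda|^{2N_n}\to\varphi(\lambda)$ for each fixed $\lambda\neq 0$, which is the claim. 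The most delicate step I anticipate is the bookkeeping in the block-matrix identity above, particularly tracking how the antilinear twist $\kappa$ interacts with conjugation by $D$ in the off-diagonal entries; once that identity is in hand, everything else follows from Proposition~\ref{oplusequiv}, standard Fredholm determinant theory, and the fact that the nonzero spectrum of a compact real linear operator consists of eigenvalues.
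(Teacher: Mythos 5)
Your proof is correct and follows essentially the same route as the paper: both realize $\varphi$ as a Fredholm determinant of identity minus a trace-class operator depending trace-norm continuously on $\lambda\neq 0$, and both reduce the limit formula to the finite-rank identity $p(\lambda,\bar\lambda)=|\lambda|^{2N}\varphi(\lambda)$ obtained by the same diagonal conjugation by $\operatorname{diag}(e^{\mp i\theta/2}I)$ that the paper uses in its rotation proposition. The only cosmetic difference is that you establish the zero-set claim via an explicit block factorization of $(R-\lambda)^\Complex$ (which checks out), whereas the paper instead invokes the equality $\Real\cap\sigma(e^{-i\theta}C+A)=\Real\cap\sigma((e^{-i\theta}C+A)^\Complex)$ from an earlier reference together with the rotation property.
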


\begin{proof}
For fixed $\theta\in\Real$, we have
$ \varphi (re^{i\theta}) =  f_{\theta} (\frac 1r)$, where $f_{\theta} (z) = \det (I - z(e^{-i\theta}C+A)^\Complex )$ is an entire function. Thus restricted to the half-line $\{ re^{i\theta} : r>0\}$,  $\varphi $ is continuous. Furthermore, by the properties of $f(z)$,  $f(r^{-1})=0$ with a positive real $r^{-1}$ if and only if  $r$ is an eigenvalue of $(e^{-i\theta}C+A)^\Complex$. It is known \cite{reallinearoperator} that $\Real \cap \sigma(e^{-i\theta}C+A) = \Real \cap \sigma((e^{-i\theta}C+A)^\Complex)$. 

On the other hand, for fixed $r>0$, we have that 
\begin{align*}
& | \varphi (re^{i\theta}) - \varphi (re^{i\psi})| \\
& \leq |e^{i\theta}-e^{i\psi}| \| C\|_1 \exp (1+ \|(e^{-i\theta}C+A)^\Complex\|_1 + \| (e^{-i\psi} C + A)^\Complex \|_1 ). 
 \end{align*}
Thus $\varphi$ is continuous in the angular direction as well, and continuous in $\Complex\setminus\{0\}$. 

For the limit, we have for all $\lambda=r e^{i\theta} \in\Complex\setminus\{0\}$ 
\begin{align*}
 \frac{p_{R_n}(r e^{i\theta}, r e^{-i\theta})}{r^{2n}} 
=&  \frac{ p_{e^{-i\theta}C_n + A_n}(r,r)}{r^{2n}} \\
=& 
 \det (I - r^{-1} (e^{-i\theta}C_n+A_n))^{\Complex} 
\to \varphi(re^{i\theta}) 
\end{align*}
as $n\to\infty$ by properties of the $2n$-by-$2n$ determinant defining the characteristic polynomial.
\end{proof}

In Proposition \ref{limitbehav} use has been made of Proposition \ref{oplusequiv} in the equivalence of convergence of real linear operators and their complexifications in Schatten norms.



\bibliographystyle{plain}
\bibliography{references_santtu}
\end{document}